\documentclass{amsart}

\usepackage{amsmath, amssymb, amsbsy}
\usepackage{array}
\usepackage{graphpap, color, paralist, pstricks}
\usepackage[mathscr]{eucal}
\usepackage[pdftex]{graphicx}
\usepackage[pdftex,colorlinks,backref=page,citecolor=blue]{hyperref}
\usepackage{pifont}

\usepackage{mathtools, thmtools}
\usepackage{cleveref}
\usepackage{float}
\usepackage{crossreftools}

\newcounter{bullet}

\usepackage{setspace}
\setdisplayskipstretch{1}

\usepackage{geometry}
\usepackage{comment}
\newtheorem{thm}{Theorem}[section]

\newtheorem{cor}[thm]{Corollary}

\newtheorem{conj}[thm]{Conjecture}

\theoremstyle{definition}

\newtheorem{claim}[thm]{Claim}

\newtheorem{remark}[thm]{Remark}

\crefname{lem}{lemma}{lemmas}
\crefname{thm}{theorem}{theorems}

\newcommand{\gl}{\lambda}

\newcommand{\RR}{\mathbb{R}}

\newcommand{\cA}{\mathcal{A} }
\newcommand{\cB}{\mathcal{B} }

\newcommand{\cF}{\mathcal{F} }
\newcommand{\cG}{\mathcal{G} }

\newcommand{\beq}[1]{\begin{equation}\label{#1}}
\newcommand{\enq}[0]{\end{equation}}

\newcommand{\eps}{\epsilon}

\newcommand{\nin}[0]{\noindent}

\newcommand{\sub}[0]{\subseteq}

\newcommand{\0}[0]{\emptyset}
\newcommand{\ra}[0]{\rightarrow}

\newcommand{\pr}[0]{\mathbb{P}}

\renewcommand{\eta}{\left(\left(\frac{q}{2}\right)^2\right)}

\newenvironment{subproof}[1][\proofname]{
  
  \begin{proof}[#1]
}{
  \end{proof}
}

\begin{document}

\title{A dimension-free comparison between expectation thresholds and fractional expectation thresholds}

\author[J. Park]{Jinyoung Park}
\address{Department of Mathematics, The Courant Institute School of Mathematics, Computing, and Data Science, New York University}
\email{jinyoungpark@nyu.edu}

\begin{abstract}
We prove a dimension-free comparison between the expectation threshold
$q(\cF)$ and the fractional expectation threshold $q_f(\cF)$ for any
nontrivial increasing family $\cF$ on a finite ground set. Specifically, we show that
there is a universal constant $K>0$ such that
\[
q_f(\cF)\le Kq(\cF)\max\{1,\log\log(1/q(\cF))\}.
\]
Combining this comparison with a recent result of Li [arXiv:2609.08967] on the fractional version of Talagrand's discrete Convexity Conjecture, we obtain a dimension-independent bound toward the conjecture.
\end{abstract}

\maketitle

\section{Introduction}

In this note, we show a dimension-free comparison between the expectation threshold and the fractional expectation threshold (\Cref{thm:MTez} below). The comparison follows rather quickly from \cite[Theorem 1.3]{pham2025sharp}, a quantitative strengthening of \cite[Theorem 1.5]{park2024conjecture}.

We first briefly recall the necessary definitions. (For a more detailed introduction to the background, see, e.g., \cite{demarco2015note,frankston2022problem}.) All logarithms are natural, unless a base is explicitly indicated.

We use $V$ for a finite set, and write $2^V$ for its power set. A family $\cF \sub 2^V$ is called \textit{increasing} if $A \in \cF$ and $B \supseteq A$ imply $B \in \cF$. For $\cG \sub 2^V$ we use $\langle \cG \rangle$ for the increasing family generated by $\cG$, namely $\{B \sub V: \exists A \in \cG, B \supseteq A\}.$ 
Throughout, we assume that  $\cF \sub 2^V$ is increasing and nontrivial, i.e., $\cF \notin \{\emptyset, 2^V\}$.

We call $\cF$  \textit{$p$-small} if there exists a $\cG \sub 2^V$ such that
\[\langle \cG \rangle \supseteq \cF \text{ and } \sum_{S \in \cG} p^{|S|} \le 1/2,\]
and the \textit{expectation threshold} of $\cF$ is $q(\cF)=\max\{p:\text{$\cF$ is $p$-small}\}$. Say $\cF$ is \textit{weakly $p$-small} if there is a function $\gl:2^V \ra \RR^+ (:= [0, \infty))$ such that
\beq{eq:wps}
\sum_{S \sub I} \gl_S \ge 1 \,\, \forall I \in \cF \,\,
\text{ and } \,\,\sum_S \gl_S p^{|S|} \le 1/2.
\enq
We then define the \textit{fractional expectation threshold}  of $\cF$ by $q_f(\cF)=\max\{p:\text{$\cF$ is weakly $p$-small}\}$.

It follows immediately from the definition that $q(\cF)\le q_f(\cF)$ (by taking the function $\gl$ to be the indicator function of $\mathcal G$).
The following conjecture of Talagrand
\cite[Conjecture 6.3]{talagrand2010many}
asserts a reverse inequality up to a universal constant.

\begin{conj}\label{LT}
    There is a universal $L>0$ such that for every finite set $V$ and increasing family $\cF \sub 2^V$,
    \[q_f(\mathcal F) \le Lq(\mathcal F).\]
\end{conj}
\nin 
Equivalently, 
\beq{eq:LT.equiv} \text{weakly $p$-small implies $(p/L)$-small.}\enq

Our main goal is to establish the following dimension-free comparison between $q$ and $q_f$.

\begin{thm}\label{thm:MTez}
 There is a universal $K >0$ such that for every finite set $V$ and increasing family $\cF \sub 2^V$,
 \[q_f(\cF) \le Kq(\cF)\cdot \max\{1,\log\log(1/q(\cF))\}.\]
\end{thm}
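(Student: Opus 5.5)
The plan is to derive \Cref{thm:MTez} from \cite[Theorem 1.3]{pham2025sharp}, which I take to give a length-dependent comparison: there is a universal $C$ such that if $\cF$ is weakly $p$-small via a $\gl$ supported on sets of size at most $\ell$, then $\cF$ is $\bigl(p/(C\max\{1,\log \ell\})\bigr)$-small. I have not verified that this is its exact statement. The task is then to reduce a general weakly $p$-small family to one where only sets of size $\ell\le \mathrm{poly}(\log(1/q))$ matter, with $q=q(\cF)$. With that choice $\log \ell = O(\log\log(1/q))$, which produces the stated factor.

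First I would dispose of the regime $q(\cF)\ge c$ for a small constant $c$. There the target inequality reads $q_f\le K'q$. This should follow because the single weakly $p$-small witness already forces $p\le 1/2$, while the $\log\log$ factor is $\max\{1,\cdot\}=1$, so $q_f\le 1/2\le (1/(2c))\,q$. So assume $q$ is small and fix $p=q_f(\cF)$ together with its witness $\gl$. Fix also a cover $\cG$ witnessing $q$-smallness, so $\langle\cG\rangle\supseteq\cF$ and $\sum_{S\in\cG}q^{|S|}\le 1/2$.

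Next I would split $\cF\subseteq\cF_1\cup\cF_2$ at a threshold $\ell=\lceil C_0\log(1/q)\rceil$ (or a fixed power of it):
\begin{itemize}
\item $\cF_1$ consists of those $I\in\cF$ with $\sum_{S\sub I,\,|S|\le \ell}\gl_S\ge 1/2$;
\item $\cF_2$ consists of the rest, for which the mass coming from sets of size $>\ell$ is at least $1/2$.
\end{itemize}
Each $\cF_i$ is increasing.

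For $\cF_1$, the function $2\gl$ restricted to sets of size $\le\ell$ witnesses that $\cF_1$ is weakly $(p/2)$-small, since $2^{|S|}\ge 2$. The length-dependent theorem then makes $\cF_1$ $\bigl(p/(C'\log\ell)\bigr)$-small. Its cover $\cG_1$ therefore has $\sum_{S\in\cG_1}(p/(C'\log\ell))^{|S|}\le 1/2$.

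For $\cF_2$ the aim is a cover $\cG_2$ at scale comparable to $q$ (or to $p/\log\ell$) whose sets all have size $>\ell$, or at least $\gtrsim\ell$. Then shrinking the scale by a factor $4$ makes its total weight at most $4^{-\ell}$ times something controlled, which is negligible. The natural candidate is $\cG_2=\{A\in\cG: |A|>\ell/2\}$ together with a separate treatment of $\cF_2\setminus\langle\cG_2\rangle$. The latter is covered by the short sets of $\cG$, yet each of its members has $\gl$-mass $\ge1/2$ on long sets.

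Finally I would union the covers. At scale $r=p/(K\log\ell)$, with $K$ large, the weight of $\cG_1$ is at most $1/4$ after shrinking by a constant, and the weight of $\cG_2$ is tiny. So $\cF$ is $r$-small, giving $q\ge r$, i.e. $q_f=p\le Kq\log\ell=O(q\log\log(1/q))$.

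The main obstacle is the $\cF_2$ step: members of $\cF$ that receive their fractional cover mainly from long sets but whose integral cover uses short sets. I would first try a direct counting argument. Long sets $S$ contribute $\gl_S p^{|S|}\le 1/2$, so if $p\gg q$, that is $p\ge q\,\mathrm{polylog}$, they are worth exponentially more at scale $p$ than at scale $q$. I would try to show that replacing each long $S$ by the $\cG$-sets it contains contradicts the maximality of $q$. Failing that, I would instead let $\ell$ depend on the witness $\gl$ and choose it to balance the two contributions.
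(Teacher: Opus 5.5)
\textbf{There is a genuine gap: the $\cF_2$ step.} You flag it yourself, but it is not a technicality. It is the whole dimension-free content of the theorem, and the proposal gives no mechanism for it.

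Your candidate $\cG_2=\{A\in\cG:|A|>\ell/2\}$ cannot work. The cover $\cG$ is controlled only at scale $q$, but you need total weight at most $1/4$ at scale $r=p/(K\log\ell)$. The only nontrivial case is $r>q$, and there $\sum_{A\in\cG_2}r^{|A|}$ is not controlled at all, so the integral cover $\cG$ is useless. Likewise, a long $S$ in the support of $\lambda$ need not contain any member of $\cG$ (only the $I\in\cF$ lie in $\langle\cG\rangle$). So ``replacing $S$ by the $\cG$-sets it contains'' is not even defined.

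The only usable information is $\lambda$. What you actually need is to round the fractional cover $2\lambda|_{\{|S|>\ell\}}$ of $\cF_2$ into an integral cover at scale $\approx p/\log\ell$, which is Talagrand's problem itself restricted to long sets. A length-dependent comparison cannot do this, because its loss depends on $\max|S|$, which is not bounded in terms of $p$. Naive truncation also fails: spreading $\lambda_S$ evenly over the $\ell$-subsets of $S$ costs $\lambda_S r^{\ell}$, which is not bounded by $\lambda_S p^{|S|}$ when $|S|\gg\ell$. The $\cF_1$ part and the final union are fine given your black box, but they do not touch this issue.

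Two smaller slips. First, $q_f\le 1/2$ is false: $\cF=\{V\}$ has $q_f=2^{-1/|V|}$. The bound $q_f<1$ suffices in that regime. Second, ``the rest'' of $\cF$ is not increasing; define $\cF_2$ instead by $\sum_{S\subseteq I,|S|>\ell}\lambda_S\ge 1/2$.

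\textbf{The missing idea} is to replace an absolute size cutoff by a relative one, which removes the short/long split entirely. \cite[Theorem 1.3]{pham2025sharp} is not a length-dependent comparison. It is the statement recorded as \Cref{thm:Pham2}: if $\cF$ is not $q$-small, then for any probability measures $\mu_I$, the set $V_{16sq}$ is $(1-2^{-s})$-bad with probability at most $2/3$.

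The paper applies it with
\begin{itemize}
\item $\mu_I(v)=\nu_I^{-1}\sum_{v\in S\subseteq I}\lambda_S$, where $\nu_I=\sum_{S\subseteq I}|S|\lambda_S\ge 1$;
\item $\rho=p/32$ and $L=2^s\ge 2\log_2(32/p)$;
\item $\cB(Y)=\{S:|S\setminus Y|<2|S|/L\}$.
\end{itemize}
With these choices, $\mu_I(Y\cap I)\le 1-\frac{2}{L}+\frac{2}{L}\sum_{S\in\cB(Y)}|S|\lambda_S$. Moreover, for every size $k$, $\mathbb{P}(S\in\cB(V_\rho))\le 2^k\rho^{k(1-2/L)}\le(4\rho)^k$, because $\rho^{-2/L}\le 2$. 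So sets of all sizes are handled uniformly with only a $2^k$ loss, and $\mathbb{E}\sum_{S\in\cB(V_\rho)}|S|\lambda_S\le\sum_S\lambda_S(p/4)^{|S|}\le 1/8$. Markov's inequality then shows $V_\rho$ is $(1-2^{-s})$-bad with probability at least $3/4$, contradicting \Cref{thm:Pham2}.

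The $\log\log(1/p)$ factor comes from needing $2^s\gtrsim\log(1/p)$ to make $\rho^{-2/L}\le 2$, not from any truncation of set sizes.
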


\nin We will in fact prove the following equivalent statement (up to changing the universal constant $K$):
\beq{eq:equiv}
\text{for every $p \in (0,1)$, if $\cF$ is weakly $p$-small, then it is $p/(K\cdot\max\{1,\log\log(1/p)\})$-small.}
\enq

\begin{proof}[Proof of the equivalence] Write $q, q_f$ for $q(\cF), q_f(\cF)$.
    That \eqref{eq:equiv} implies \Cref{thm:MTez} follows immediately from $q \le q_f$ and the fact that $f(t)= \max\{1,\log\log(1/t)\}$ is non-increasing.
    
    To see the converse: fix the universal constant $K \ge 1$ in \Cref{thm:MTez}, and suppose that $\cF$ is weakly $p$-small. Since \Cref{LT} holds when $q$ is bounded away from zero (after adjusting the constant $L$), we may assume that $q$ is sufficiently small, say, $q\le \min\{1/100, 1/K^3\}$. Thus \Cref{thm:MTez} gives that $p\le q_f \le Kq\log\log(1/q)$, and hence
    \[\log(1/p)\ge \log(1/q)-\log K-\log\log\log(1/q) \ge \frac{1}{2}\log(1/q).\]
    Therefore,
\[
p\le 2Kq\log\log(1/p)
 \le 2Kq\max\{1,\log\log(1/p)\}. \qedhere
\]
\end{proof}

To place \Cref{thm:MTez} in context, we first review some related background. 
\Cref{LT} has motivated a number of partial results. In particular, the conjectured constant-factor
comparison has been established under various additional assumptions
on the family $\cF$ or the function $\gl$
\cite{demarco2015note,frankston2022problem,fischer2023some,dubroff2024note,pham2025sharp,fischer2025further, fischer2025fractional}.

In the unrestricted setting, the best previously known general bound
is obtained by combining the results of Pham \cite[Theorem 1.2]{pham2025sharp} and Fischer
and Person \cite[Theorem 10]{fischer2023some}:
\beq{eq:best} q_f(\cF)\le Cq(\cF)\cdot \max\{1, \log\log|V|\}\footnote{Here we assume $|V| \ge 2$ to ensure that $\log\log|V|$ is meaningful.}.\enq

We remark that \Cref{thm:MTez} strengthens \eqref{eq:best}, up to a change in the universal constant. To see this, note that \eqref{eq:best} is equivalent to
\beq{eq:best'}\text{for every $p \in (0,1)$, weakly $p$-small implies $p/(C\cdot \max\{1,\log\log|V|\})$-small.}\enq
If $p \le 1/(2|V|)$,  taking $\cG$ to be the family of all singleton subsets of $V$ shows that every nontrivial increasing family is $p$-small. Therefore, even the conclusion of \eqref{eq:LT.equiv}  automatically holds  in this case. On the other hand, if $p>1/(2|V|)$, then 
\[
\max\{1,\log\log(1/p)\}
=
O\left(\max\{1,\log\log|V|\}\right).
\]
Thus, \eqref{eq:equiv} implies \eqref{eq:best'}.

The primary motivation for seeking a dimension-free comparison was the discrete version of Talagrand's Convexity Conjecture (see, e.g., \cite{talagrand2010many, ascoli2026reformulation}) -- see \Cref{rmk:dim}. In \Cref{sec:discussion}, we derive a  weaker version of the discrete Convexity Conjecture (\Cref{cor:DCC}), while retaining dimension independence.

Our derivation is inspired by the approach of \cite{dubroff2024note}. We replace their Lemma 1.3 (which is essentially \cite[Lemma~3.4]{bednorz2022suprema}) with \cite[Theorem 1.3]{pham2025sharp}. A similar argument appears in \cite[Section 3]{pham2025sharp}. %, although the present derivation was directly motivated by \cite{dubroff2024note}. 
We next restate \cite[Theorem 1.3]{pham2025sharp} in the form that we will use; this requires a little additional preparation. 

Let $\cF \sub 2^{V}$ be given. For each $I \in \cF$, let $\mu_I$ be a probability measure on $I$; that is, $\mu_I(v) \ge 0$ for every $v \in I$ and $\sum_{v \in I}\mu_I(v)=1$. For $A \sub I$, write $\mu_I(A):=\sum_{v \in A}\mu_I(v)$. We say that  $Y \sub V$ is \textit{$c$-bad} ($c >0$) if
\[\sup_{I \in \mathcal F} \mu_I(Y \cap I)<c.\]
For $r\in[0,1]$, let $V_r$ denote a random subset of $V$
obtained by including each element independently with probability $r$.
With this notation, \cite[Theorem 1.3]{pham2025sharp} can be stated as follows:

\begin{thm}\label{thm:Pham2}
    Let $q \in (0,1)$, and suppose $\cF$ is not $q$-small. If $s \ge 1$ is an integer and $16sq \le 1$, then
    \[\pr(\text{$V_{16sq}$  is $(1-2^{-s})$-bad})\le 2/3.\]
\end{thm}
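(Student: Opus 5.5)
\Cref{thm:Pham2} is \cite[Theorem 1.3]{pham2025sharp} rewritten in our notation, so the first thing I would do is check the dictionary. Pham's notion of a ``$(1-2^{-s})$-bad'' random set should match ours: no $I\in\cF$ has at least a $(1-2^{-s})$ fraction of its $\mu_I$-mass inside $Y$. His hypothesis ``$\cF$ is not $q$-small'' should also match our definition with the constant $1/2$. Once these agree (and constants such as $16$ and $2/3$ are rescaled if his normalization differs), the statement can simply be quoted. If I had to reprove it, I would argue by contraposition, in the spirit of the Park--Pham ``minimum fragment'' method, as follows.

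\textbf{Setup.} Assume $\pr(V_{16sq}\text{ is }(1-2^{-s})\text{-bad})>2/3$. The goal is then to build $\cG$ with $\langle\cG\rangle\supseteq\cF$ and $\sum_{S\in\cG}q^{|S|}\le 1/2$. I would couple $V_{16sq}$ with a union $W=W_1\cup\dots\cup W_s$ of independent copies of $V_{16q}$, which is stochastically dominated by $V_{16sq}$. Badness is a decreasing property, so $W$ is bad with probability $>2/3$ as well. The plan is to run $s$ rounds. In round $i$, each $I$ has its uncovered part $I\setminus(W_1\cup\dots\cup W_{i-1})$, and I track the $\mu_I$-mass of this part. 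The aim is to show that, unless a cheap cover appears, each round captures at least half of the remaining mass for every $I$ simultaneously. Iterating this gives $\mu_I(I\cap W)\ge 1-2^{-s}$ for all $I$, i.e.\ $W$ is not bad.

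\textbf{One round.} Fix $W_{<i}$ and, for each $I$, the weighted set $R_I=I\setminus W_{<i}$ carrying $\mu_I$. Call $I$ \emph{stuck} for a sample $U\sim V_{16q}$ if $\mu_I(R_I\cap U)<\tfrac12\mu_I(R_I)$. For stuck pairs $(U,I)$, take a minimal fragment $T(U,I)=J\setminus U$, where $J\in\cF$, $J\subseteq U\cup I$, and $J$ minimizes $|J\setminus U|$. The family $\cG_U$ of these fragments, together with $U$, certifies that $\cF\subseteq\langle \cG_U\rangle$ on the event in question. The standard counting step then bounds $\mathbb E\sum_{S\in\cG_U}q^{|S|}$. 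One encodes the pair $(U, T)$ by $U\cup T$ together with $T$, and uses $\binom{n}{|U|+|T|}\big/\binom{n}{|U|}\approx (16q)^{-|T|}$. This yields $\mathbb E\sum q^{|T|}\le\sum_t 16^{-t}\cdot(\text{number of choices of } T\subseteq U\cup T)$, which sums to at most $1/4$ or so. Since $\cF$ is not $q$-small, no such cheap cover exists. Hence, by Markov's inequality, with probability $\ge 1-\tfrac{1}{3s}$ (after adjusting constants) no $I$ is stuck in round $i$. A union bound over the $s$ rounds then gives probability $\ge 1/3$ that $W$ is not bad, contradicting the assumption.

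\textbf{Main obstacle.} The delicate step is the counting in the one-round argument. The fragment $T$ has to be chosen so that being stuck in the \emph{$\mu_I$-weighted} sense, rather than the cardinality sense, forces $T$ to be determined by few bits given $U\cup T$. This is exactly where Pham's refinement over \cite{park2024conjecture} lies. My first attempt would choose $T$ inside the heaviest $\mu_I$-part of $R_I$ and charge it against the unweighted Park--Pham count. If that fails to give the geometric decay uniformly in $s$, I would fall back on citing \cite[Theorem 1.3]{pham2025sharp} directly, which is what the statement is.
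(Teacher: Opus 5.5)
Your first move (match the definitions and quote \cite[Theorem 1.3]{pham2025sharp}) is exactly what the paper does. The paper gives no proof of this statement; it simply restates Pham's theorem in its notation. The self-contained sketch you offer as a fallback would not go through, however, and the problems are structural rather than a matter of constants.

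\textbf{The one-round claim is false.} You cannot conclude that \emph{no} $I$ is stuck. Take $V=[n]$, $\cF=2^V\setminus\{\emptyset\}$ and $q=1/n$. Then $\cF$ is not $q$-small, since any cover must contain $\emptyset$ or all $n$ singletons. Yet $I=V$ with $\mu_V$ uniform has $\mu_V(U)=|U|/n\approx 16/n<1/2$ for $U\sim V_{16q}$, so it is stuck with high probability.

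What Markov's inequality plus non-$q$-smallness actually gives is only that the event ``\emph{every} $I$ is stuck'' is unlikely (on that event the fragments would cover $\cF$). In other words, \emph{some} set is good. That is all the statement needs, since failing to be $(1-2^{-s})$-bad asks for a single $I$. This has three consequences for your scheme:
\begin{itemize}
\item The iteration cannot track the residuals $R_I$ of all $I\in\cF$. Round $i$ must be run on the family of residual fragments $J\setminus W_{<i}$ of the survivors of earlier rounds, with minimal fragments taken inside that family, and the bad fragments of all rounds must be charged against one budget. Your $T(U,I)$ is taken in the original $\cF$ and ignores $W_{<i}$, so your rounds never interact.
\item Badness has to be a property of the realizer $J$ (the canonical member of the current family inside $U\cup T$), not of $I$. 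The counting needs $T$ to be a large part of $J$, and stuckness of $I$ says nothing about that.
\item With weights, even $\mu_J(T)>1/2$ does not force $|T|\ge c|J|$. So the $e^{O(|T|)}$ bound on the number of $T$ given $U\cup T$ is exactly the obstacle you point to, and it is left unresolved.
\end{itemize}

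\textbf{The union bound hides an $s$-dependence.} In your scheme, with $U\sim V_{Lq}$ per round, the expected cost of the bad fragments is of order $1/L$. A per-round failure probability of $1/(3s)$ therefore forces $L=\Omega(s)$, giving density $\Theta(sq)$ per round and $\Theta(s^2q)$ in total, not $16sq$. Linear dependence on $s$ is precisely what the cited theorem supplies, and the paper needs it. With $s^2q$, the choice $q=p/(512s)$ would have to become $q=p/\Theta(s^2)$, and this route would only give \Cref{thm:MTez} with a factor $(\log\log(1/q))^2$.

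So the sketch is not a proof; the citation is the argument.
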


\section{Proof of \Cref{thm:MTez}}

We prove \eqref{eq:equiv}. Our proof initially follows the argument of \cite{dubroff2024note}. Fix $p \in (0,1)$, and set
\[s=\left\lceil \log_2\log_2(32/p)\right\rceil+1, \quad q=p/(512s), \quad \rho=16sq.\]
For convenience, write $L$ for $2^s$.

Suppose that $\cF$ is weakly $p$-small, and let $\gl$ satisfy \eqref{eq:wps}. We may (and will) assume $\gl_\0=0$; indeed, if $\gl_\0>0$, then define $\gl'$ by 
\[\text{$\gl'_\0=0$ and 
$\gl'_S=\gl_S/(1-\gl_\0)$ if $S\neq \0$}.\] 
Then $\gl'$ also satisfies \eqref{eq:wps} (with 1/2 improved to 
$(1-2\gl_\0)/(2(1-\gl_\0))$).

For each $I \in \cF$, set 
\beq{eq:nu_def}\nu_I := \sum_{S \sub I} |S|\gl_S.\enq
Since $\gl_\0=0$ and $\sum_{S \sub I} \gl_S \ge 1$, we have $\nu_I \ge 1$.
Define $\mu_{I}:I \ra \RR^+$ by
\[\mu_{I}(v)=\nu_I^{-1}\sum_{v \in S \sub I} \gl_S  \,\,\,\, \forall v\in I\]
and notice that $\mu_I(I) = 1$. 

Suppose, for a contradiction, that $\cF$ is not $q$-small.   Then by \Cref{thm:Pham2} and the definition of $\rho$, the probability that $V_\rho$ is $(1-2^{-s})$-bad (with respect to the $\mu_I$'s) is at most $2/3$. We will obtain a contradiction by proving the following lower bound.

\begin{claim}\label{cl:contradiction} With probability at least $3/4$,
$V_{\rho}$ is $(1 - 2^{-s})$-bad with respect to the $\mu_I$'s.
\end{claim}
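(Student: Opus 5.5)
We will show directly that, with probability at least $3/4$, every $I\in\cF$ satisfies $\mu_I(I\setminus W)>2^{-s}=1/L$, where $W:=V_\rho$. Note that $\rho=16sq=p/32$, and that $L=2^s\ge 2\log_2(32/p)\ge 4$ by the choice of $s$. For $I\in\cF$ we have
\[
\mu_I(I\setminus W)=\nu_I^{-1}\sum_{S\sub I}\gl_S|S\setminus W|.
\]
We lower-bound $|S\setminus W|$ in two ways, giving two estimates. We will use the first when $\nu_I<L/2$ and the second when $\nu_I\ge L/2$. Each estimate holds off a low-probability event, and we control that event via Markov's inequality and \eqref{eq:wps}.

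\emph{Small $\nu_I$.} Since $|S\setminus W|\ge \mathbf 1\{S\not\sub W\}$ and $\sum_{S\sub I}\gl_S\ge 1$, we get
\[
\mu_I(I\setminus W)\ge \nu_I^{-1}\bigl(1-Y\bigr),\qquad Y:=\sum_{S\sub W}\gl_S .
\]
Because $\gl_\0=0$, we have $\mathbb E Y=\sum_S\gl_S\rho^{|S|}\le 32^{-1}\sum_S\gl_S p^{|S|}\le 1/64$. Hence $\pr(Y\ge 1/2)\le 1/32$. On the event $Y<1/2$, every $I$ with $\nu_I<L/2$ has $\mu_I(I\setminus W)>1/(2\nu_I)>1/L$. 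This bound is independent of $I$, which is what allows a single event to handle all $I$ at once.

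\emph{Large $\nu_I$.} For each $S$ with $|S|=k\ge1$, let $E_S$ be the event $|S\cap W|>(1-2/L)k$. Off $E_S$ we have $|S\setminus W|\ge 2k/L$. Set $Z:=\sum_S\gl_S|S|\mathbf 1_{E_S}$. Then
\[
\mu_I(I\setminus W)\ge \nu_I^{-1}\cdot\tfrac{2}{L}\bigl(\nu_I-Z\bigr)=\tfrac2L\bigl(1-Z/\nu_I\bigr).
\]
This exceeds $1/L$ whenever $Z<\nu_I/2$, which certainly holds if $\nu_I\ge L/2$ and $Z<L/4$. Since $L\ge4$, it suffices that $Z<1$.

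The main step is bounding $\mathbb E Z$. A union bound over which at most $\lfloor 2k/L\rfloor$ elements of $S$ are missed gives
\[
\pr(E_S)\le \binom{k}{\lfloor 2k/L\rfloor}\rho^{k(1-2/L)}\le \Bigl((eL/2)^{2/L}\,(32/p)^{2/L}\,\rho\Bigr)^k .
\]
Here $(32/p)^{2/L}\le 2$ since $L\ge2\log_2(32/p)$, and $(eL/2)^{2/L}\le e^{2/e}<3$. Hence $\pr(E_S)\le (6\rho)^k=(3p/16)^k$, and
\[
\mathbb E Z\le \sum_S \gl_S p^{|S|}\,|S|(3/16)^{|S|}\le \tfrac12\max_{k\ge1}k(3/16)^k<\tfrac1{10}.
\]
So $\pr(Z\ge1)<1/10$.

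Combining the two cases, with probability at least $1-1/32-1/10>3/4$ both $Y<1/2$ and $Z<1$ hold. On this event every $I\in\cF$ has $\mu_I(W\cap I)=1-\mu_I(I\setminus W)<1-2^{-s}$, so $V_\rho$ is $(1-2^{-s})$-bad, proving the claim. The delicate point is the choice of $s$: it makes $(32/p)^{2/L}$ a constant, and it is exactly what makes the binomial tail estimate for $E_S$ go through.
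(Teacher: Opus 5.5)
Your proof is correct and is essentially the paper's argument: your $E_S$ is exactly the event $S\in\cB(V_\rho)$, your $Z$ is the paper's $\sum_{S\in\cB(X)}|S|\gl_S$, and you control $\mathbb{E}Z$ the same way, via a union bound, the estimate $(32/p)^{2/L}\le 2$, and Markov. The case split on $\nu_I$ and the auxiliary variable $Y$ are redundant, since $\nu_I\ge 1$ and $Z<1/2$ (probability greater than $4/5$ by Markov and your bound on $\mathbb{E}Z$) already give $\mu_I(I\setminus V_\rho)>1/L$ for every $I$; also, your intermediate claim $(eL/2)^{2/L}\le e^{2/e}$ is false at $L=4$, where the left side is $\sqrt{2e}$, but the bound $(eL/2)^{2/L}\le e<3$ that you actually need holds for all $L>0$.
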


\begin{proof}

For any $Y \sub V$ and $I \in \cF$,
\[
\mu_{I}(Y \cap I) = \nu_I^{-1}\sum_{v \in Y \cap I}\sum_{v \in S \sub I}\gl_S =\nu_I^{-1}\sum_{S \sub I} |S \cap Y|\gl_S=1-\nu_I^{-1}\sum_{S \sub I} |S \setminus Y|\gl_S.
\]
Here we depart from the argument of \cite{dubroff2024note}. 
For $Y \sub V$, define\footnote{For readers familiar with the proof of \cite{dubroff2024note}: $\cB(Y)$ plays a role analogous to that of the family $\{S: S \sub Y\}$ in \cite{dubroff2024note}. The assumption on $\gl$ there allows us to restrict attention to sets $S$ with $|S|\le r$. For such sets, $S \not\sub Y$ implies $|S \setminus Y| \ge |S|/r$, a crucial point in that proof. Here, the analogous bound $|S \setminus Y|\ge 2|S|/L$ holds whenever $S \notin \cB(Y)$.}
\[\cB(Y)=\{S \sub V: |S \setminus Y|< 2|S|/L\}.\]
Since $S \notin \cB(Y)$ implies $L|S \setminus Y|/2\ge |S|$,
\[\begin{split}\nu_I^{-1}\sum_{S \sub I}|S \setminus Y|\gl_S&\ge \nu_I^{-1}\sum_{S \sub I, S \notin \cB(Y)} |S \setminus Y|\gl_S\\
&\ge \frac{2}{L\nu_I}\sum_{S \sub I, S \notin \cB(Y)}|S|\gl_S\\
&\stackrel{\eqref{eq:nu_def}}{=}\frac{2}{L}-\frac{2}{L\nu_I}\sum_{S \sub I, S \in \cB(Y)}|S|\gl_S.
\end{split}\]

\nin When $Y=V_\rho$, we may bound the last sum (for any $I$) by $\sum_{S \in \cB(V_\rho)}|S|\gl_S$, so it remains to prove the following bound.

\begin{claim}\label{cl:Bad}
    Write $X=V_\rho$. With probability at least $3/4$, $\sum_{S \in \cB(X)}|S|\gl_S <1/2$.
\end{claim}

\begin{subproof}
    Note that $S \in \cB(Y)$ iff $|S \cap Y|>(1-2/L)|S|$. Therefore, for $S \in {V \choose k}$,
    \[\pr(S \in \cB(X))\le \binom{k}{k-\lfloor 2k/L\rfloor}\rho^{k-\lfloor 2k/L \rfloor}\le2^k\rho^{k(1-2/L)}\le (4\rho)^k\]
    where the last inequality holds because, by our choice of parameters, $\rho^{-2/L}\le 2$.  
    Since $k \le 2^k$,
    \[k\cdot \pr(S \in \cB(X))\le (8\rho)^k.\]
    Thus
    \[\mathbb E\left[\sum_{S \in \cB(X)}|S|\gl_S\right]=\sum_{S\ne \emptyset}\pr(S \in \cB(X))|S|\gl_S\le \sum_{S \ne \emptyset}\gl_S(8\rho)^{|S|}=\sum_{S \ne \emptyset}\gl_S(p/4)^{|S|}\le \frac{1}{4}\sum_S \gl_Sp^{|S|}\le \frac{1}{8}.\]
    Now apply Markov's inequality.
\end{subproof}

By \Cref{cl:Bad} (and since $\nu_I\ge 1$ for any $I \in \cF$), with probability at least $3/4$,
\[\sup_{I \in \cF}\mu_I(V_\rho \cap I)<1-\frac{1}{L},\]
i.e., $V_\rho$ is $(1-1/L)$-bad.
\end{proof}

 \Cref{cl:contradiction} gives the desired contradiction, so  $\cF$ is $q$-small. Finally, since $
s=O\bigl(\max\{1,\log\log(1/p)\}\bigr)$ and $q=p/(512s)$, we obtain the conclusion in \eqref{eq:equiv}.

\section{Discussion}\label{sec:discussion}

We now discuss the connection with the discrete version of
Talagrand's Convexity Conjecture. For $p\in[0,1]$, let $\mu_p$
denote the distribution of $V_p$ on $2^V$; 
that is, for every $A \sub V$, $\mu_p(\{A\})=p^{|A|}(1-p)^{|V \setminus A|}$, and $\mu_p(\mathcal A)=\pr(V_p \in \mathcal A)=\sum_{A \in \mathcal A}\mu_p(\{A\})$ for every $\mathcal A \sub 2^V$.

For a family $\cA \sub 2^V$ and a positive integer $k$, set
\[\cA_{(k)}:=\{S \sub V: S \sub A_1 \cup \cdots \cup A_k \text{ for some } A_1, \ldots, A_k \in \cA\},\]
and $\cA^{(k)}:=2^V \setminus \cA_{(k)}$. Thus $\cA^{(k)}$ is the increasing family of sets not contained
in any union of $k$ members of $\cA$.

\begin{conj}[Talagrand \cite{talagrand1995all, talagrand2010many}]\label{conj:DCC}
    There exist a positive integer $k$ and a universal constant $L\ge 1$ such that, for every $p \in (0,1)$, finite set $V$, and  family $\cA \sub 2^V$, if $\mu_p(\cA) \ge 1-1/(2k)$ then $\cA^{(k)}$ is $(p/L)$-small.
\end{conj}

Hua, Song, and Tudose \cite{hua2026talagrand} resolved the original Gaussian version of Talagrand's Convexity Conjecture. Using an argument of Talagrand, they deduced the following partial result bound toward \Cref{conj:DCC}; see \cite[Corollary 1.4 and Appendix A]{hua2026talagrand}.

\begin{thm}\label{thm:HST}
    There exist $\eps>0$ and $L \ge 1$ so that for any finite set $V$, $p \in (0,1)$, and any family $\cA \sub 2^V$, if $\mu_p(\cA) \ge 1-\eps$ then $\cA^{(3)}$ is $p^L$-small.
\end{thm}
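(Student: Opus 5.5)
The plan is to transfer the problem to Gauss space and use the Gaussian form of Talagrand's Convexity Conjecture proved by Hua, Song, and Tudose as a black box. The transfer is Talagrand's original reduction. Set $n=|V|$ and choose a threshold $t$ with $\pr(g>t)=p$ for a standard Gaussian $g$. Define $\phi:\RR^n\ra 2^V$ by $\phi(x)=\{i:x_i>t\}$. Then $\phi$ pushes the standard Gaussian measure $\gamma_n$ forward to $\mu_p$. For $\cA\sub 2^V$, put $\tilde\cA=\phi^{-1}(\cA)$, so that $\gamma_n(\tilde\cA)=\mu_p(\cA)\ge 1-\eps$.

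The Gaussian theorem, in its covering form, says the following. If $\gamma_n(\tilde\cA)\ge 1-\eps$, then the complement of a suitable convex combination of three points of $\tilde\cA$ (for instance a dilate of the convex hull of $\tilde\cA+\tilde\cA+\tilde\cA$) is covered by half-spaces $H_j=\{x:\langle u_j,x\rangle\ge a_j\}$. These half-spaces satisfy $\sum_j \gamma_n(H_j)\le 1/2$, and in fact $\sum_j e^{-a_j^2/(2C)}$ is small for unit vectors $u_j$ and some constant $C$.

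The key step is to convert each half-space into sets $S\sub V$. Fix $B\in\cA^{(3)}$, so $B$ is not contained in $A_1\cup A_2\cup A_3$ for any $A_i\in\cA$. I would show that the point $x_B$ placing large coordinates $y\gg t$ on $B$ and value $0$ elsewhere cannot lie in the convex combination. Roughly, if $x_B$ were an average of $x^{(1)},x^{(2)},x^{(3)}\in\tilde\cA$, then every $i\in B$ would have some $x^{(m)}_i>t$, forcing $B\sub\bigcup\phi(x^{(m)})$. This needs a monotonicity step: replace $\cA$ by its down-closure, which changes neither $\cA^{(3)}$ nor increases the needed measure. So $x_B$ lies in some $H_j$. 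From each $H_j$ I then extract a set $S_j$, namely the coordinates carrying most of the weight of $u_j$ where $y$ exceeds the threshold $a_j$. I would aim to show that $x_B\in H_j$ forces $S\sub B$ for some $S$ in a small family $\cG_j$ with $\sum_{S\in\cG_j}p^{L|S|}\lesssim e^{-a_j^2/(2C)}$.

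The main obstacle is this last conversion: bounding $\sum_{S\in\cG_j}p^{L|S|}$ in terms of the Gaussian measure of $H_j$. The relation $\log(1/p)\asymp t^2$ is exactly why the loss is polynomial, $p^L$ rather than $p/L$. I would handle it by a Gaussian-tail versus binomial comparison. A half-space of measure $e^{-a^2/2}$ corresponds to sets of size about $a^2/t^2$, and there are few enough of these to be absorbed into the exponent $L$. Summing over $j$ then gives $\sum_S p^{L|S|}\le 1/2$ with $\langle\cG\rangle\supseteq\cA^{(3)}$.
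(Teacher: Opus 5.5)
\textbf{Verdict: there is a genuine gap, at the half-space-to-set conversion you flag as the main obstacle.}

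Your overall route matches the one the paper describes. The paper does not reprove this statement; it quotes it from Hua, Song and Tudose (Corollary 1.4 and Appendix A), who deduce it from their Gaussian theorem by an argument of Talagrand. That route is: threshold at $t$ with $\pr(g>t)=p$, apply the Gaussian convexity theorem to $\tilde{\cA}=\phi^{-1}(\cA)$, and pull the conclusion back.

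The thresholding and the averaging step are sound, provided the Gaussian input concerns a dilated Minkowski sum $c(\tilde{\cA}+\tilde{\cA}+\tilde{\cA})$. Your parenthetical ``convex hull'' must go. Since $\mathrm{conv}(\tilde{\cA}+\tilde{\cA}+\tilde{\cA})=3\,\mathrm{conv}(\tilde{\cA})$, a point of it can be a convex combination of many points of $\tilde{\cA}$. Then $B\subseteq\phi(x^{(1)})\cup\phi(x^{(2)})\cup\phi(x^{(3)})$ no longer follows. Also, the down-closure step is unnecessary, because $\cA_{(3)}$ is down-closed by definition.

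\textbf{The proposed lemma is false.} You claim that $x_B\in H_j$ forces $B$ to contain a member of a family $\cG_j$ with $\sum_{S\in\cG_j}p^{L|S|}\lesssim e^{-a_j^2/(2C)}$. Here is a counterexample.
\begin{itemize}
\item Take $u=|V|^{-1/2}\mathbf{1}_V$ and $a=10$, so $\gamma_n(H)=\pr(g\ge 10)<10^{-20}$.
\item Then $x_B=y\mathbf{1}_B\in H$ if and only if $|B|\ge 10\sqrt{|V|}/y$. The sets involved have size of order $\sqrt{|V|}/y$, not $a^2/t^2$.
\item The family $\{B: x_B\in H\}$ contains $V_{p^L}$ with probability tending to $1$ as $|V|\to\infty$, because $|V_{p^L}|$ has mean $p^L|V|$.
\item By the union bound, no $\cG_H$ with $\langle\cG_H\rangle\supseteq\{B:x_B\in H\}$ can have $\sum_{S\in\cG_H}p^{L|S|}\le 1/2$, let alone $\lesssim e^{-a^2/(2C)}$.
\end{itemize}

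More generally, if $u$ is spread evenly over $m$ coordinates, you must cover $\binom{m}{k}$ candidate sets with $k\approx a\sqrt{m}/y$. The factor $p^{Lk}$ absorbs this only when $m$ is at most about $p^{-2L}(a/y)^2$.

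Your heuristic implicitly assumes that the coordinates of $u_j$ that matter have size about $t/a_j$, so there are at most about $a_j^2/t^2$ of them. For such concentrated normals your computation does work. However, the condition $\sum_j\gamma_n(H_j)\le 1/2$ says nothing about the supports of the $u_j$. The cover produced by the Gaussian theorem is arbitrary, and adding a half-space of measure $10^{-20}$ keeps it valid.

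So a half-space-by-half-space conversion through the fixed test points $x_B$ cannot give a bound independent of $|V|$. This is exactly where dimension dependence enters. Overcoming it is the real content of the transference argument the paper cites, and your sketch does not supply it.
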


For comparison, a result of Pham and the author
\cite[Theorem 1.1]{park2024proof} implies that, for any $\cA \sub 2^V$ with
$\mu_p(\cA)\ge1/2$, the family $\cA^{(1)}$ is
$(p/(C\log\ell(\cA)))$-small for a universal constant $C$, where $\ell(\cA)$ is the maximum of 2 and the size of a largest minimal element of $\cA^{(1)}$.
Since $\cA^{(3)}\subseteq\cA^{(1)}$, this also gives the same
smallness bound for $\cA^{(3)}$. The two bounds from \cite{hua2026talagrand} and \cite{park2024proof} are  not uniformly comparable; we refer the readers to the discussion following
\cite[Corollary 1.4]{hua2026talagrand} for further discussion.

\begin{remark}\label{rmk:dim}
A key requirement of \Cref{conj:DCC} is that the smallness parameter depend only on the density parameter $p$, independently of the size of the ground set. The importance of dimension independence was emphasized by Talagrand in \cite[Section 1]{talagrand2010many}.
\end{remark}

Recently, Li \cite[Corollary 3.1]{li2026spread} proved a fractional version of \Cref{conj:DCC}. His result is formulated in terms of spread probability measures. By the duality relation between spread measures and being weakly-small \cite[Proposition 6.7]{talagrand2010many}, it yields the following statement in our terminology:

\begin{thm}\label{thm:Li}
        For every finite set $V$, $p \in (0,1)$, and family $\cA \sub 2^V$ satisfying $\mu_p(\cA)>1/2$, the family $\cA^{(2)}$ is weakly $(p/2)$-small.
\end{thm}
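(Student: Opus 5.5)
The statement is not proved from scratch here. It is a translation of Li's \cite[Corollary 3.1]{li2026spread} into the language of weak smallness. The plan is to pass through the linear-programming duality between weak smallness and spread measures, \cite[Proposition 6.7]{talagrand2010many}, and then check that the constants line up.

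\emph{Step 1: write the dual.} Fix $p$ and an increasing $\cF \sub 2^V$. Weak $p$-smallness is the feasibility of the LP in \eqref{eq:wps}: choose $\gl\ge 0$ with $\sum_{S\sub I}\gl_S\ge 1$ for all $I\in\cF$, minimizing $\sum_S \gl_S p^{|S|}$. The dual LP maximizes $\sum_{I\in\cF} w_I$ over $w\ge 0$ subject to
\[\sum_{I\in\cF,\, I\supseteq S} w_I\le p^{|S|}\quad\text{for every } S.\]
By LP duality (finite ground set, both programs feasible), $\cF$ fails to be weakly $p$-small exactly when the primal optimum exceeds $1/2$. Equivalently, there is such a $w$ with total mass $W>1/2$. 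Normalizing gives a probability measure $\nu=w/W$ on $\cF$ with
\[\nu(\{I: I\supseteq S\})\le p^{|S|}/W< 2p^{|S|}\quad\text{for all } S.\]
This is the spread condition, and it is the content of \cite[Proposition 6.7]{talagrand2010many}. So ``$\cF$ is weakly $p$-small'' is equivalent to ``$\cF$ supports no probability measure with $\nu(I\supseteq S)\le 2p^{|S|}$ for all $S$'', up to the exact normalization of the spread constant.

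\emph{Step 2: apply Li.} Take $\cF=\cA^{(2)}$, which is increasing, and replace $p$ by $p/2$. Suppose for contradiction that $\cA^{(2)}$ is not weakly $(p/2)$-small. Step 1 then yields a probability measure $\nu$ on $\cA^{(2)}$ that is spread at the scale prescribed by $p/2$. Li's \cite[Corollary 3.1]{li2026spread} should say exactly that, when $\mu_p(\cA)>1/2$, no measure on sets not covered by two members of $\cA$ can be that spread. The heuristic is that a $V_p$-like random set typically lies in $\cA$, so a sufficiently spread random set is covered by the union of two members of $\cA$. This contradicts $\nu$ being supported on $\cA^{(2)}$.

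\emph{Main obstacle.} The only real issue is bookkeeping of constants. Li's spread notion may be normalized as $\nu(I\supseteq S)\le q^{|S|}$, with or without a factor $2$, possibly with a strict inequality, and possibly with the scale written as $1/q$. That normalization must be matched against the factor $2$ and the threshold $1/2$ appearing in \eqref{eq:wps}, and against the $p/2$ in the conclusion. I would first write out Li's hypothesis verbatim. I would then check that the dual measure from Step 1, at parameter $p/2$, satisfies it, absorbing the factor $W^{-1}<2$ into the passage from $p$ to $p/2$. If the constants do not close, the fallback is to use that weak smallness is monotone in $p$ and lose a constant factor in $p$, which is harmless for the intended application.
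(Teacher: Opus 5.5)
Your proposal matches the paper's justification: the paper gives no independent argument, and obtains the statement by translating Li's Corollary 3.1 through the LP duality between weak smallness and spread measures (Talagrand's Proposition 6.7). That is exactly your Step 1, where you correctly use only the direction needed: if $\mathcal{A}^{(2)}$ is not weakly $(p/2)$-small, there is a probability measure $\nu$ on $\mathcal{A}^{(2)}$ with $\nu(\{I: I\supseteq S\})<2(p/2)^{|S|}$ for all $S$. The paper asserts that the normalizations close at exactly $p/2$, so your constant-losing fallback would only be needed if they did not, and in that case it would prove a weaker statement than the one given.
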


Combining \Cref{thm:MTez} with \Cref{thm:Li}, we obtain the following dimension-free bound.

\begin{cor}\label{cor:DCC}
There is a universal $L>0$ such that, for every finite set $V$, $p \in (0,1)$, and $\cA \sub 2^V$ satisfying $\mu_p(\cA)>1/2$, the family  $\cA^{(2)}$ is $p/(L\max\{1, \log\log(1/p)\})$-small.\footnote{By extending Li's argument to $\cA^{(2m)}$ for integers $m \ge 1$ and combining this with \Cref{thm:MTez}, one also obtains the conclusion that $\cA^{(2m)}$ is $\Omega(p/(L\max\{1,\log\log(1/p)\})^{1/m})$-small, where the implicit constant is universal.}
\end{cor}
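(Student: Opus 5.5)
The plan is to chain \Cref{thm:Li} with the reformulation \eqref{eq:equiv} of \Cref{thm:MTez}. Fix $V$, $p\in(0,1)$, and $\cA\sub 2^V$ with $\mu_p(\cA)>1/2$.

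\Cref{thm:Li} then gives directly that $\cA^{(2)}$ is weakly $(p/2)$-small. Note that $\cA^{(2)}$ is increasing, as observed just before \Cref{conj:DCC}.

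Next I will dispose of the degenerate cases, since \eqref{eq:equiv} is stated for nontrivial increasing families. If $\cA^{(2)}=\0$, it is $r$-small for every $r$ by taking $\cG=\0$, and there is nothing to prove. The case $\cA^{(2)}=2^V$ cannot occur, because weak smallness excludes it. Indeed, if $\0\in\cA^{(2)}$, then the constraint at $I=\0$ forces $\gl_\0\ge 1$, and so $\sum_S\gl_S(p/2)^{|S|}\ge 1>1/2$, a contradiction. Hence $\cA^{(2)}$ is nontrivial.

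Now apply \eqref{eq:equiv} with $p/2$ in place of $p$. This shows that $\cA^{(2)}$ is $(p/2)/(K\max\{1,\log\log(2/p)\})$-small. Since $\max\{1,\log\log(2/p)\}\le C\max\{1,\log\log(1/p)\}$ for a universal $C$ and all $p\in(0,1)$, we get the required smallness parameter with $L=2KC$. The last step uses monotonicity of smallness in the parameter: if $\cF$ is $r$-small via $\cG$ and $r'\le r$, then $\sum_{S\in\cG}r'^{|S|}\le\sum_{S\in\cG} r^{|S|}\le 1/2$, so $\cF$ is $r'$-small as well.

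The only point needing slight care is the comparison of $\log\log(2/p)$ with $\log\log(1/p)$ when $p$ is close to $1$. There $\log\log(1/p)$ is negative or undefined, but the $\max\{1,\cdot\}$ absorbs this. When $p\le 1/e$ we have $\log(2/p)\le 2\log(1/p)$, so $\log\log(2/p)\le \log 2+\log\log(1/p)$. When $p>1/e$, $\log\log(2/p)\le\log(1+\log 2+1)$, which is bounded by a constant. Everything else is direct substitution.
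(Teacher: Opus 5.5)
Your proposal is correct and follows the paper's route exactly: the paper obtains the corollary simply by combining \Cref{thm:Li} with \Cref{thm:MTez}, used in its equivalent form \eqref{eq:equiv} at $p/2$. You have filled in the routine details the paper leaves implicit (nontriviality of $\cA^{(2)}$, monotonicity of smallness in the parameter, and absorbing $\max\{1,\log\log(2/p)\}$ into $O(\max\{1,\log\log(1/p)\})$), and all of them check out.
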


\section*{Acknowledgments}
We thank Michel Talagrand for stimulating conversations and for his warm encouragement to publicize the findings in this manuscript. All original mathematical ideas presented in this work are the author's own, built upon prior joint work with collaborators. Generative AI was used during the revision stage, and the final text remains entirely the responsibility of the author. JP was supported by NSF CAREER Grant DMS-2443706  and a Sloan Fellowship.

\bibliographystyle{plain}
\bibliography{bibliography}

\end{document}